\documentclass[11pt]{amsart}

\usepackage{epsfig,overpic,comment}
\usepackage[usenames,dvipsnames,svgnames,table]{xcolor}
\usepackage[hyphens]{url}
\usepackage[pagebackref,linktocpage=true,colorlinks=true,linkcolor=Blue,citecolor=BrickRed,urlcolor=RoyalBlue]{hyperref}
\usepackage[msc-links,abbrev]{amsrefs}
\usepackage{amsmath,amsthm,amssymb,marginnote,upgreek}
\usepackage{enumerate}

\newtheorem{theorem}{Theorem}[section]
\newtheorem{corollary}[theorem]{Corollary}
\newtheorem{lemma}[theorem]{Lemma}
\newtheorem{proposition}[theorem]{Proposition}

\theoremstyle{definition}
\newtheorem{note}[theorem]{Note}

\newcommand{\be}{\begin{equation}}
\newcommand{\ee}{\end{equation}}

\newcommand{\R}{\mathbf{R}}

\newcommand{\C}{\mathcal{C}}
\newcommand{\G}{\mathcal{G}}

\renewcommand{\H}{\mathbf{H}}
\renewcommand{\S}{\mathbf{S}}
\renewcommand{\epsilon}{\varepsilon}

\DeclareMathOperator{\Pf}{Pf}
\DeclareMathOperator{\TPf}{TPf}

\begin{document}

\title[Isoperimetric and total curvature inequalities]{Isoperimetric and total curvature inequalities in\\Cartan-Hadamard manifolds with  nullity}

\author{Mohammad Ghomi}
\address{School of Mathematics, Georgia Institute of Technology,
Atlanta, GA 30332}
\email{ghomi@math.gatech.edu}
\urladdr{www.math.gatech.edu/~ghomi}

\begin{abstract}
Using the Chern-Gauss-Bonnet theorem, we establish a sharp inequality for the total Gauss-Kronecker curvature of convex hypersurfaces in Cartan-Hadamard manifolds $M^n$ with nullity index at least $n-3$. 
Consequently, the Euclidean isoperimetric inequality extends to $M^n$, which proves the Cartan-Hadamard conjecture for these spaces.
\end{abstract}

\date{\today\,(Last Typeset)}

\makeatletter
\def\subjclassname{\textup{2020} Mathematics Subject Classification}
\makeatother
\subjclass[2020]{Primary 53C20, 53C21; Secondary 53C42, 52A40.}

\keywords{Cartan-Hadamard conjecture, convex hypersurfaces, manifolds with nullity, conullity, total Gauss-Kronecker curvature, Chern-Gauss-Bonnet theorem, isoperimetric inequality.}

\thanks{The author was supported by NSF grant DMS-2202337.}

\maketitle

\section{Introduction}
A \emph{Cartan-Hadamard manifold} $M^n$ is a complete simply connected Riemannian space of nonpositive sectional curvature. These spaces are natural generalizations of Euclidean space $\R^n$. The \emph{nullity index} $\mu(M)$, introduced by Chern and Kuiper
\cite{chern-kuiper1952}, measures the flatness of $M$. More precisely, at each point $p\in M$, let
\[
        \mathcal N_p
        :=
       \big \{Z\in T_pM\mid R(X,Y)Z=0
        \text{ for all }X,Y\in T_pM\big\}
\]
be the nullity space of the curvature  tensor $R$ of $M$. Then 
\(
       \mu(M):= \inf_{p\in M}\dim (\mathcal N_p).
\)
So $M^n=\R^n$ if and only if $\mu(M)=n$.
A \emph{convex hypersurface} $\Gamma\subset M$ is the boundary of a compact convex set with nonempty interior. The \emph{total curvature} of $\Gamma$, when it is $\C^{1,1}$, is defined as $\G(\Gamma):=\int_\Gamma GK$ where $GK$ is the Gauss-Kronecker curvature. Let $|\S^{n-1}|$ denote the volume of the unit sphere in $\R^n$. We show

\begin{theorem}\label{thm:main}
Let
\(
        M^n
\)
be a Cartan-Hadamard manifold with $\mu(M)\geq n-3$. Then 
\be\label{eq:G}
        \G(\Gamma)\geq |\S^{n-1}|,
\ee
for any smooth $(\C^\infty)$ convex hypersurface $\Gamma\subset M$. 
\end{theorem}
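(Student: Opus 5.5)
Let $\Omega$ be the compact convex domain bounded by $\Gamma$. The plan is to apply the Chern--Gauss--Bonnet theorem for manifolds with boundary to $\Omega$. This writes $\chi(\Omega)=1$ as the sum of an interior integral of the Pfaffian $\Pf(R)$ over $\Omega$ and a boundary integral over $\Gamma$ of the transgression form $\TPf$. The transgression is built from the second fundamental form of $\Gamma$ and the restriction of $R$ to $T\Gamma$.

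Concretely, with $n$ even we would write
\[
|\S^{n-1}|/2\cdot\chi(\Omega)=c_n\int_\Omega \Pf(R)+\int_\Gamma \TPf .
\]
Here $\TPf=\sum_k c_{n,k}\,\mathcal{Q}_k$, where $\mathcal{Q}_k$ contracts $k$ copies of the curvature tensor with $n-1-2k$ copies of the shape operator. The $k=0$ term is exactly $GK$, normalized so that the whole identity reads
\[
|\S^{n-1}|=c_n\int_\Omega\Pf+\G(\Gamma)+\sum_{k\ge1}\int_\Gamma(\cdots).
\]
For odd $n$ the interior term is absent and the boundary formula takes the analogous form.

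The nullity hypothesis makes all the curvature terms collapse. At each point, $R(X,Y)Z$ vanishes whenever $Z\in\mathcal N_p$, and by the symmetries of $R$ this also holds when $X\in\mathcal N_p$ or $Y\in\mathcal N_p$. So $R$ lives on the conullity space $\mathcal N_p^\perp$, which has dimension at most $3$.

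First consider the interior term. The Pfaffian is a full contraction of $n/2$ curvature tensors against all $n$ directions, so it uses at least $4$ independent directions inside $\mathcal N^\perp$. For $n\ge4$ this forces $\Pf(R)=0$. If $n\le3$, the hypothesis $\mu\ge n-3$ is vacuous, but then $\Pf$ is Gaussian curvature, which is $\le0$ when $n=2$, and there is no interior term when $n=3$.

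Next consider the boundary terms. Terms with $k\ge2$ contain two curvature tensors antisymmetrized over distinct directions, which again requires $4$ independent directions in $\mathcal N^\perp$, so they vanish. This leaves only the $k=1$ term, which is a multiple of $\sum \sigma_{n-3}(\text{shape operator restricted to }E^\perp)\cdot K(E)$. The sum runs over tangent $2$-planes $E\subset T_p\Gamma$, arranged suitably via the Gauss-type decomposition, and $K(E)$ is the ambient sectional curvature of $E$. Each $K(E)\le0$, and convexity gives $\sigma_{n-3}\ge0$, so after fixing signs this term contributes $\le0$ to the right-hand side. Rearranging gives $\G(\Gamma)\ge|\S^{n-1}|$.

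The main obstacle is getting the transgression form right. I need an explicit expression for the $k=1$ boundary term as a manifestly signed combination: a nonnegative coefficient, namely an elementary symmetric function of the principal curvatures restricted to a complementary subspace, times sectional curvatures. This is subtle because the curvature $2$-form restricted to $\Gamma$ need not be diagonal in a principal frame of $\Gamma$. I would first try to use the nullity structure to choose an orthonormal frame adapted to $\mathcal N^\perp\cap T\Gamma$. Because the conullity is at most $3$-dimensional, the $k=1$ term reduces to a sum over pairs of frame vectors $e_i,e_j$ of $R_{ijij}$ times a cofactor (minor) of the shape operator. Convexity makes those minors nonnegative, since principal minors of a positive semidefinite matrix are nonnegative, and the off-diagonal curvature contributions cancel by the antisymmetry of the contraction.
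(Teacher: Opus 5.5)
Your overall route is the paper's. You use Chern--Gauss--Bonnet, then the fact that each curvature form $\Omega_{ij}$ is pulled back from a conullity space of dimension at most $3$, so any wedge of two curvature forms vanishes. That kills $\Pf_C$ for even $n\ge4$ and every boundary term with two or more curvature factors. What remains is a sign argument for the single surviving curvature term.

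One packaging difference concerns odd $n$. The paper does not use a boundary formula on the odd-dimensional body $C$. It applies Theorem~\ref{thm:gbc} to the closed even-dimensional $\Gamma$ and substitutes the Gauss equation \eqref{eq:Gauss}. Your odd-dimensional boundary formula amounts to the same computation via $\chi(C)=\chi(\Gamma)/2$. It is not the form of the theorem recorded in Section~\ref{sec:gbc}, however, so you would have to derive or cite it and check the signs of its coefficients.

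The genuine problem is the step you single out as the main obstacle. The difficulty you anticipate is not there, and the remedy you propose fails as stated.

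\emph{The principal frame already works.} Take a principal frame, so $\alpha_a=\kappa_a\theta_a$. In a term $\alpha_{\sigma(1)}\wedge\cdots\wedge\alpha_{\sigma(n-3)}\wedge\Omega_{ij}$, the $\alpha$'s already occupy every coframe direction except $\theta_i,\theta_j$. Evaluating on $e_1,\dots,e_{n-1}$ therefore annihilates every component of $\Omega_{ij}$ except $\Omega_{ij}(e_i,e_j)=K_{ij}$. So the term is a positive multiple of $K_{ij}\prod_{\ell\neq i,j}\kappa_\ell\le0$, as in \eqref{eq:TPfGamma}. This holds whether or not the ambient curvature is ``diagonal'' in that frame, and nullity plays no role in this step.

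\emph{Your conullity-adapted frame does not.} In a frame adapted only to the conullity, $A$ need not be diagonal. Write $A_{ab}:=\alpha_a(e_b)$. The single-curvature term is then $\sum_{i<j}\sum_{c<d}\pm R(e_c,e_d,e_i,e_j)\,m_{ij,cd}$, where $m_{ij,cd}$ is the minor of $(A_{ab})$ with rows $\{i,j\}^c$ and columns $\{c,d\}^c$. The cross terms with $\{c,d\}\neq\{i,j\}$ pair off-diagonal curvature components with non-principal minors, and they do not cancel by antisymmetry. For example, for $n=4$, both $\alpha_1\wedge\Omega_{23}$ and $\alpha_2\wedge\Omega_{31}$ contribute $-A_{12}\,R(e_1,e_3,e_2,e_3)$ to the coefficient of $\theta_1\wedge\theta_2\wedge\theta_3$. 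These add, and $R(e_1,e_3,e_2,e_3)$ is in general nonzero.

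Your route could be rescued by choosing the frame to diagonalize the curvature operator on $\Lambda^2T_p\Gamma$. This is possible because the restricted curvature lives on the orthogonal complement of $T_p\Gamma\cap\mathcal N_p$ in $T_p\Gamma$, which has dimension at most $3$. There, via the Hodge star, the eigenvectors of that operator are coordinate planes of an orthonormal frame. Nonnegativity of the principal minors of $A$ would then finish the argument. The principal-frame computation makes this detour unnecessary.
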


 In any Cartan-Hadamard manifold where the total curvature inequality \eqref{eq:G} holds, the classical Euclidean isoperimetric inequality follows  \cite[Thm. 7.1]{ghomi-spruck2022}. Specifically, letting $B^n$ denote the unit ball in $\R^n$, we obtain

\begin{corollary}
For any bounded region $\Omega\subset M^n$ of finite perimeter,
\be\label{eq:Isop}
\frac{|\partial\Omega|^n}{\,\;\;|\Omega|^{n-1}}\geq \frac{|\S^{n-1}|^n}{\;\;|B^n|^{n-1}},
\ee
with equality only if $\Omega$ is isometric to a ball in $\R^n$.
\end{corollary}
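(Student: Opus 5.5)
The plan is to deduce the corollary directly from Theorem~\ref{thm:main} together with \cite[Thm.~7.1]{ghomi-spruck2022}. That result says the following: in a Cartan-Hadamard manifold $M^n$ where every convex hypersurface satisfies the total curvature inequality \eqref{eq:G}, the Euclidean isoperimetric inequality \eqref{eq:Isop} holds for all bounded regions of finite perimeter, with equality only for Euclidean balls. So the work is to check that the class of hypersurfaces covered by Theorem~\ref{thm:main} matches the class needed by that theorem. The first step is therefore to state precisely which hypersurfaces \cite{ghomi-spruck2022} needs. I expect these to be $\C^{1,1}$ convex hypersurfaces, such as the outer parallel hypersurfaces of a convex body or the boundaries arising in their comparison argument. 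Theorem~\ref{thm:main} gives \eqref{eq:G} only for $\C^\infty$ ones.

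The second step closes this gap by approximation. Let $\Gamma$ be a $\C^{1,1}$ convex hypersurface bounding a convex body $K$. I would approximate $K$ by smooth convex bodies $K_i$, for instance by smoothing the convex distance function $d_K$ (convex in a Cartan-Hadamard manifold) through Greene-Wu type Riemannian convolution. Then $K_i$ can be taken as a sublevel set of the smoothed function at a regular value; it is strictly convex after adding a small multiple of $d_p^2$ for an interior point $p$. The boundaries $\Gamma_i$ converge to $\Gamma$ in $\C^{1}$ with uniformly bounded second fundamental forms. Hence their Gauss-Kronecker curvatures converge to $GK$ almost everywhere after pulling back along normal projections, with uniform bounds. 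Dominated convergence then gives $\G(\Gamma_i)\to\G(\Gamma)$. Alternatively, I would use weak continuity of the curvature measures under Hausdorff convergence of convex bodies. Either way, $\G(\Gamma)\ge|\S^{n-1}|$ for all $\C^{1,1}$ convex $\Gamma$.

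The third step applies \cite[Thm.~7.1]{ghomi-spruck2022}. Its proof minimizes perimeter with a convexity/volume constraint and uses the total curvature bound to control the comparison, which yields \eqref{eq:Isop} for all bounded sets of finite perimeter. The equality case follows from their rigidity statement. Equality forces the extremal region to be convex with $\G=|\S^{n-1}|$ and to satisfy equality in every comparison, which forces the region to be flat and hence isometric to a Euclidean ball.

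I expect the main obstacle to be the approximation in the second step, namely guaranteeing convergence of $\int GK$ when only $\C^{1,1}$ regularity is available. My first attempt would be the uniform second fundamental form bounds from smoothing $d_K$ combined with dominated convergence.
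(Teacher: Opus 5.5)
Your proposal follows the paper's route: the corollary comes from combining Theorem~\ref{thm:main} with \cite[Thm.~7.1]{ghomi-spruck2022}. The paper handles the passage from $\C^\infty$ to nonsmooth convex hypersurfaces in its final Note exactly by your ``alternative'', namely continuity of $\G$ under Hausdorff convergence \cite{ghomi2026-continuity} together with density of smooth convex hypersurfaces \cite[Lem.~3.9]{ghomi2026-total}. Rely on that route rather than your primary one, because $\C^1$ convergence with uniformly bounded second fundamental forms gives only weak-$*$ convergence of the shape operators, not a.e.\ convergence of the nonlinear quantity $GK$ (compare $i^{-2}\sin(ix)\to0$ in $\C^1$, whose second derivatives $-\sin(ix)$ stay bounded but fail to converge a.e.), so your dominated convergence step does not follow as stated.
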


The \emph{Cartan-Hadamard conjecture} \cite{gromov1999, aubin1975, burago-zalgaller1988} states that the isoperimetric inequality \eqref{eq:Isop} holds in all Cartan-Hadamard manifolds. This has been established only in dimensions $n\leq 4$ \cite{weil1926,kleiner1992,croke1984}, while the total curvature inequality \eqref{eq:G} is known only for $n\leq 3$. Both inequalities  hold in all dimensions when $M^n$ has constant curvature or $\Gamma$ is a geodesic sphere \cite{ghomi-spruck2022,borbely2002}. Theorem \ref{thm:main} gives the first class of Cartan-Hadamard
manifolds of dimension \(n\geq4\), beyond space forms, for which \eqref{eq:G} holds, and also yields irreducible nonhomogeneous examples of dimension $n\geq 5$ where \eqref{eq:Isop} holds.
See \cite{kloeckner-kuperberg2019,ghomi-spruck2022} for background and \cite{ghomi-spruck2023b,ghomi-stavroulakis2026, ghomi-stavroulakis2026b,li-lin-xu2025} for more recent results.

The isoperimetric inequality is stable under Cartesian products \cite[p. 335]{gromov1991},  \cite[Sec. 3.3]{ros2005}, but the total curvature inequality \eqref{eq:G} is new even in the case of product manifolds, such as $\H^3\times\R$, where $\H^3$ is  hyperbolic $3$-space. Manifolds with nullity arise naturally in connection with isometric immersions, totally geodesic
foliations, and rigidity phenomena in Riemannian geometry
\cite{ferus1971,rosenthal1969,gorodski-guimaraes2023}. There are many examples which do not split off a Euclidean factor, even locally.  For conullity-two examples, see
\cite{brooks2025,vanhook2024,boeckx-kowalski-vanhecke1996}; for related
graph manifolds, whose universal covers give irreducible Cartan-Hadamard
manifolds with nullity, see \cite{florit-ziller2020}.

The proof of Theorem \ref{thm:main} is an application of the
Chern-Gauss-Bonnet formula \eqref{eq:GBC} together with Gauss' equation \eqref{eq:Gauss}.
When \(n\) is odd, we apply the formula to the even-dimensional
hypersurface \(\Gamma\), and when \(n\) is even,  to the
convex body $C$ bounded by \(\Gamma\). In either case, the nullity
assumption implies that all terms in the Chern-Gauss-Bonnet integrands
involving two or more ambient curvature factors vanish. Thus the
integrands reduce to the Gauss-Kronecker curvature \(GK\), up to a
correction term which contains exactly one ambient sectional curvature
factor and some principal curvatures of \(\Gamma\). By the Cartan-Hadamard
assumption, the ambient curvature factor is nonpositive, while
convexity of \(\Gamma\) ensures that the principal curvatures are nonnegative.
Hence the correction term is nonpositive, and we obtain \eqref{eq:G}.

\section{The Chern-Gauss-Bonnet Theorem}\label{sec:gbc}
We begin by recording the formulation of the Chern-Gauss-Bonnet theorem  that we need.
 Let $N^k$ be an oriented even-dimensional Riemannian manifold with metric $\langle \cdot,\cdot\rangle$, Levi-Civita connection $\nabla$, and curvature tensors
 \[
 R(X,Y)Z:=\nabla_X\nabla_Y Z-\nabla_Y\nabla_X Z-\nabla_{[X,Y]}Z,
 \]
\(
        R(X,Y,Z,W):=\langle R(X,Y)Z,W\rangle.
\) 
At each point $p\in N$,  choose a positively oriented orthonormal basis $e_1,\dots,e_k\in T_pN$. The corresponding curvature $2$-forms of $N$ are
\[
        \Omega_{ij}(X,Y):=R(X,Y,e_i,e_j).
\]
The Pfaffian $k$-form associated to  the skew-symmetric matrix $\Omega=(\Omega_{ij})$ is defined by
\[
\Phi:= \sum_{\sigma\in S_{k}}\operatorname{sgn}(\sigma)\,
        \Omega_{\sigma(1)\sigma(2)}\wedge\cdots\wedge
        \Omega_{\sigma(k-1)\sigma(k)},
\]
where \(S_k\) denotes the symmetric group on \(k\) elements,
\(\operatorname{sgn}(\sigma)=1\) for even permutations, and
\(\operatorname{sgn}(\sigma)=-1\) for odd permutations.
The associated normalized scalar function is
\be\label{eq:Pf}
        \Pf_N:=c_k\Phi(e_1,\dots,e_{k}),
\ee
where the constant $c_k:=1/k!$ is chosen so that $\Pf_{\S^{k}}\equiv 1$. Note that $\Pf_N$ is independent of the choice of the frame $e_i$, since $\Phi$ is invariant under conjugation by \(\textup{SO}(k)\).

Now suppose that $N$ is compact with boundary $\partial N$. Let $\nu=e_{k}$ be the outward unit normal of $\partial N$, and let $e_1,\dots,e_{k-1}$ be a positively oriented local orthonormal frame on $\partial N$. For \(X\in T(\partial N)\), let
\(
        A(X):=\nabla_X\nu
\)
be the shape operator, and
\[
        \alpha_i(X):=\langle A(X),e_i\rangle
\]
be the corresponding $1$-forms on $\partial N$. So if $e_i$ are the principal directions, which diagonalize $A$, then $\alpha_i(e_i)=\kappa_i$ are the corresponding principal curvatures of $\partial N$. The Chern boundary term is given by the transgression formula
\be\label{eq:TPf}
        \TPf_{\partial N}:=\sum_{s=0}^{k/2-1} c_{k,s}\Phi_s(e_1,\dots,e_{k-1}),
\ee
where $c_{k,s}:={|\S^{k-1}|}/
({(4\pi)^s\,|\S^{k-1-2s}|\, (k-1-2s)!})$ are normalizing constants, and $\Phi_s$ are $(k-1)$-forms given by combining the shape operator forms of $\partial N$ and the ambient curvature forms of $N$ as follows
\begin{multline*}
        \Phi_s:=
        \sum_{\sigma\in S_{k-1}}\operatorname{sgn}(\sigma)
        \bigl(\alpha_{\sigma(1)}\wedge\cdots\wedge
        \alpha_{\sigma(k-1-2s)}\bigr) \\
        \wedge
        \bigl(\Omega_{\sigma(k-2s)\sigma(k-2s+1)}\wedge\cdots\wedge
        \Omega_{\sigma(k-2)\sigma(k-1)}\bigr).
\end{multline*}
Here the second parenthesis is omitted when $s=0$. Thus the first term in \eqref{eq:TPf} reduces to the Gauss-Kronecker curvature:
\be\label{eq:GK}
c_{k,0}\Phi_0(e_1,\dots, e_{k-1})=\det(A)=\kappa_1\dots\kappa_{k-1}=GK.
\ee
Let $\chi$ denote the Euler characteristic. We have

\begin{theorem}[Chern-Gauss-Bonnet]\label{thm:gbc}
Let $N^{k}$ be a compact oriented even-dimensional Riemannian manifold, with possibly empty smooth boundary $\partial N$. Then
\be\label{eq:GBC}
        \chi(N)=
        \frac{2}{|\S^{k}|}\int_N \Pf_N
        +\frac{1}{|\S^{k-1}|}\int_{\partial N} \TPf_{\partial N},
\ee
where the boundary term is omitted when $\partial N=\emptyset$.
\end{theorem}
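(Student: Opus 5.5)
This is the classical theorem of Chern (with the boundary version due to Chern and Allendoerfer–Weil), so the paper will presumably cite it rather than prove it. If I were to prove it, I would follow Chern's intrinsic transgression argument. Let $\pi\colon SN\to N$ be the unit tangent bundle. Over an orthonormal frame $e_1,\dots,e_k$ write a unit vector as $u=e_k$, and let $\omega_{ij}$ be the connection $1$-forms, so that $\Omega_{ij}=d\omega_{ij}-\sum_l\omega_{il}\wedge\omega_{lj}$.

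\emph{Step 1: transgression.} Following Chern, I would define the $(k-1)$-form $\Pi$ on $SN$ by
\[
\Pi:=\frac{1}{|\S^{k-1}|}\sum_{s=0}^{k/2-1} c_{k,s}\sum_{\sigma\in S_{k-1}}\operatorname{sgn}(\sigma)\,\omega_{\sigma(1)k}\wedge\cdots\wedge\omega_{\sigma(k-1-2s)k}\wedge\Omega_{\sigma(k-2s)\sigma(k-2s+1)}\wedge\cdots\wedge\Omega_{\sigma(k-2)\sigma(k-1)}.
\]
I would check that $\Pi$ is independent of the frame completing $u$, using $\mathrm{SO}(k-1)$-invariance. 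The key identity to verify is
\[
d\Pi=-\frac{2}{|\S^k|}\,\pi^*(\Pf_N\,dV).
\]
This is a direct computation using the structure equations and the Bianchi identity $d\Omega=\omega\wedge\Omega-\Omega\wedge\omega$. The terms telescope in $s$ exactly when the constants $c_{k,s}$ satisfy the stated recursion. I expect this bookkeeping of constants and signs to be the main obstacle. I would first get the constants right by testing against two model cases:
\begin{itemize}
\item the round sphere $\S^k$, where $\chi=2$ and $\Pf\equiv1$;
\item the unit ball $B^k\subset\R^k$, where $\Omega=0$, $\TPf=GK\equiv1$, and the boundary integral equals $|\S^{k-1}|$, giving $\chi=1$.
\end{itemize}

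\emph{Step 2: Poincaré--Hopf.} Choose a smooth vector field $X$ on $N$ with finitely many zeros $p_1,\dots,p_m$ in the interior, and with $X=\nu$ along $\partial N$. The unit field $X/|X|$ gives a section $\sigma$ of $SN$ over $N_\epsilon:=N\setminus\bigcup_i B_\epsilon(p_i)$. Stokes' theorem applied to $\sigma^*\Pi$ expresses $\frac{2}{|\S^k|}\int_{N_\epsilon}\Pf_N$ as the sum of two kinds of boundary terms:
\begin{itemize}
\item integrals of $\sigma^*\Pi$ over the small spheres $\partial B_\epsilon(p_i)$;
\item minus the integral of $\sigma^*\Pi$ over $\partial N$.
\end{itemize}

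\emph{Step 3: the two kinds of boundary terms.}
\begin{itemize}
\item As $\epsilon\to0$, only the $s=0$ term restricted to the fibers survives, since the curvature terms are bounded while the spheres shrink. That term is the normalized volume form of $S^{k-1}_p$. Its pullback therefore integrates to the degree of $X/|X|$ on the small sphere, namely $\operatorname{ind}_{p_i}X$.
\item On $\partial N$ we have $\sigma=\nu=e_k$. Hence $\omega_{ik}(Y)=\langle\nabla_Y e_k,e_i\rangle$, which up to the sign convention equals $\alpha_i(Y)$. Thus $\sigma^*\Pi$ is exactly $\frac{1}{|\S^{k-1}|}\TPf_{\partial N}\,dA$.
\end{itemize}
Summing the indices and using Poincaré--Hopf for outward-pointing fields, $\sum_i\operatorname{ind}_{p_i}X=\chi(N)$, gives \eqref{eq:GBC}.

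The closed case is the special case where the $\partial N$ term is absent. Orientation and sign conventions (the sign of $d\Pi$, and the orientation of $\partial B_\epsilon$ versus $\partial N$) are the remaining delicate points. These are again pinned down by the $\S^k$ and $B^k$ checks.
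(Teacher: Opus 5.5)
The paper gives no proof of this theorem; it cites Chern, Allendoerfer--Weil and Spivak. Your outline (transgression form $\Pi$ on the unit sphere bundle, Stokes on $N$ minus small balls, Poincar\'e--Hopf with an outward field) is Chern's intrinsic argument, so your route is the intended one. Where your plan does not go through is the constant bookkeeping you defer.

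\textbf{The constants.} Your calibration cases cannot fix them. On $B^k$ only the $s=0$ term survives, and $\S^k$ has no boundary. So these two checks see only $c_k$, $c_{k,0}$ and the overall sign of the boundary term. They are blind to $c_{k,s}$, including its sign, for $1\le s\le k/2-1$.

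A test that does see them is the family of geodesic balls $B_r\subset\S^k$, $0<r<\pi$. There $\Pf\equiv1$, $\alpha_i=\cot r\,\theta_i$, and $\Omega_{ij}=\theta_i\wedge\theta_j$ on the boundary (with the convention $\Omega_{ij}(e_i,e_j)=K_{ij}$ that the paper uses in Section 3). Writing $a_s:=(k-1)!\,c_{k,s}$, the right-hand side of \eqref{eq:GBC} equals
\[
\frac{2|\S^{k-1}|}{|\S^{k}|}\int_0^r\sin^{k-1}t\,dt+\sum_{s=0}^{k/2-1}a_s\cos^{k-1-2s}r\,\sin^{2s}r .
\]
Requiring this to be identically $1$ forces $a_0=1$ and $2(s+1)\,a_{s+1}=(k-1-2s)\,a_s$. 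This is the relation that makes Chern's $d\Phi_s$ telescope. Hence
\[
c_{k,s}=\frac{|\S^{k-1}|}{(4\pi)^s\,s!\,|\S^{k-1-2s}|\,(k-1-2s)!},
\]
so the constant printed in the statement is missing the factor $s!$ in the denominator; Chern's original $\Pi$ contains it.

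Consequently, for $k\ge 6$ the identity $d\Pi=-\frac{2}{|\S^k|}\pi^*(\Pf_N\,dV)$ you propose to verify is false in general with the printed $c_{k,s}$. For instance, with $k=6$ and $r=\pi/4$ the printed constants give about $1.33$ instead of $\chi=1$. Carried out correctly, your computation proves the corrected formula. This is harmless for the paper: its argument uses only the $s=0$ term, the positivity of the $s=1$ coefficient (unchanged, since $1!=1$), and the vanishing of all $s\ge2$ terms under the nullity hypothesis.

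\textbf{The limit at the zeros.} Near each zero $p_i$, your justification that ``the curvature terms are bounded while the spheres shrink'' is incomplete. The factors $\sigma^*\omega_{ik}$ grow like $|d\sigma|$, so they are not bounded. There are two fixes:
\begin{enumerate}
\item Choose $X$ with nondegenerate zeros. Then $|d\sigma|=O(1/\epsilon)$, and the $s$-th term is $O(\epsilon^{2s})$.
\item More cleanly, in a local trivialization, homotope $\sigma|_{\partial B_\epsilon(p_i)}$ radially to a map into the fiber $S_{p_i}N$ of degree $\operatorname{ind}_{p_i}X$. Since $d\Pi$ is pulled back from $N$, the error is bounded by $\frac{2}{|\S^k|}\int_{B_\epsilon(p_i)}|\Pf_N|$, which tends to $0$.
\end{enumerate}
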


For proofs, see Chern \cite{chern1945}, Allendoerfer-Weil \cite{allendoerfer-weil1943}, or Spivak \cite[Addendum 2]{spivak:v5}. Our formulation here is consistent with Morgan-Johnson \cite[Sec. 4.1]{morgan-johnson2000} and Gilkey-Park \cite[Thm. 1.3]{gilkey-park2015}. It quickly follows that

\begin{corollary}\label{cor:gbc}
Let $M^n$ be a Cartan-Hadamard manifold, and $\Gamma\subset M$ be a smooth convex hypersurface bounding a convex body $C$. If $n$ is odd, then
\[
\int_\Gamma \Pf_\Gamma=|\S^{n-1}|.
\]
If $n$ is even, then
\[
\int_\Gamma \TPf_{\Gamma}=|\S^{n-1}|-2\frac{|\S^{n-1}|}{|\S^n|}\int_C\Pf_C.
\]
\end{corollary}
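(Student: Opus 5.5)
The plan is to apply Theorem~\ref{thm:gbc} to $\Gamma$ when $n$ is odd and to $C$ when $n$ is even. In both cases the only topological input is that a convex body in a Cartan-Hadamard manifold is a topological ball. To see this, pick an interior point $o\in C$. Since $M$ is Cartan-Hadamard, $\exp_o$ is a diffeomorphism, and every geodesic ray from $o$ meets $\Gamma$ exactly once. If a ray met $\Gamma$ twice, geodesic convexity of $C$ together with the fact that $o$ lies in the interior would force the first hit to be an interior point of $C$, which is a contradiction.

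Radial projection from $o$ therefore gives a homeomorphism $\Gamma\cong\S^{n-1}$, and $C$ is homeomorphic to the closed ball $B^n$. Because $\Gamma$ is smooth, so is this radial map; transversality of the rays to $\Gamma$ holds for a smooth convex hypersurface with $o$ in the interior. Even without smoothness of the map, the homeomorphism already suffices for computing Euler characteristics. Hence $\chi(\Gamma)=\chi(\S^{n-1})$ and $\chi(C)=1$.

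\emph{Case $n$ odd.} Here $\Gamma$ is a closed oriented manifold of even dimension $k=n-1$, and $\chi(\Gamma)=2$. Theorem~\ref{thm:gbc} with empty boundary gives
\[
2=\frac{2}{|\S^{n-1}|}\int_\Gamma \Pf_\Gamma ,
\]
which is exactly the first claim. Orientability is automatic, since $\Gamma$ bounds $C$; we orient it by the outward normal.

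\emph{Case $n$ even.} Now $N=C$ is a compact oriented manifold of dimension $k=n$ with smooth boundary $\partial C=\Gamma$, and $\chi(C)=1$. Theorem~\ref{thm:gbc} gives
\[
1=\frac{2}{|\S^n|}\int_C\Pf_C+\frac{1}{|\S^{n-1}|}\int_\Gamma\TPf_\Gamma .
\]
Multiplying by $|\S^{n-1}|$ and rearranging yields the stated identity. We must check that the boundary orientation and the outward normal $\nu=e_k$ match the conventions in \eqref{eq:TPf}; that is precisely how the formula was set up.

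The only step that is not purely formal is the topological one, namely that a smooth convex hypersurface bounds a topological ball. I expect this to be the main point, though it is standard. I would handle it with the radial projection argument above, using that the distance function from $o$ is convex and that $C$ is compact, so every ray exits $C$.
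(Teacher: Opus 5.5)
Your proposal is correct and matches the paper's proof: the paper likewise uses that $C$ is homeomorphic to $B^n$, so $\chi(C)=1$ and, for $n$ odd, $\chi(\Gamma)=2$, and then applies Theorem~\ref{thm:gbc} with $N=\Gamma$ or $N=C$. Your radial-projection argument correctly fills in the topological step that the paper only asserts.
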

\begin{proof}
Since $M^n$ is Cartan-Hadamard, $C$ is homeomorphic to the unit ball $B^n$, and consequently $\Gamma$ is homeomorphic to $\S^{n-1}$. In particular, when $n$ is odd, $\chi(\Gamma)=2$ which yields the first equality above by Theorem \ref{thm:gbc} with $N=\Gamma$. The second equality also follows immediately from Theorem \ref{thm:gbc} with $N=C$, since  $\chi(C)=1$.
\end{proof}

\section{Proof of Theorem \ref{thm:main}}
The nullity assumption has the following consequence for the curvature forms:
\begin{lemma}\label{lem:curvature-forms-product}
If \(\mu(M)\geq n-3\), then for all indices \(1\leq i,j,k,\ell\leq n\),
\be\label{eq:Omega}
        \Omega_{ij}\wedge\Omega_{k\ell}=0,
\ee
with respect to any orthonormal basis.
\end{lemma}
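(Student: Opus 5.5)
Fix a point \(p\in M\). Since \(\mu(M)\geq n-3\), the nullity space \(\mathcal N_p\) has dimension at least \(n-3\), so its orthogonal complement \(\mathcal N_p^\perp\) has dimension at most \(3\). The plan is to show that every curvature 2-form \(\Omega_{ij}\) at \(p\) is a 2-form "supported" on \(\mathcal N_p^\perp\). That is, each \(\Omega_{ij}\) lies in \(\Lambda^2(\mathcal N_p^\perp)^*\), pulled back to \(T_pM\) by orthogonal projection. The wedge product of two 2-forms is then a 4-form on a space of dimension at most \(3\), hence zero. Since \(\Omega_{ij}\wedge\Omega_{k\ell}\) is tensorial and the claim is pointwise, this proves \eqref{eq:Omega} for any orthonormal basis.

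The key step is to show that \(\Omega_{ij}(X,Y)=R(X,Y,e_i,e_j)\) vanishes whenever \(X\) or \(Y\) lies in \(\mathcal N_p\). By definition, \(Z\in\mathcal N_p\) means \(R(X,Y)Z=0\) for all \(X,Y\), i.e. \(R(X,Y,Z,W)=0\) for all \(X,Y,W\).
\begin{itemize}
\item The pair symmetry \(R(X,Y,Z,W)=R(Z,W,X,Y)\) gives \(R(Z,W,X,Y)=0\) for all \(W,X,Y\).
\item Hence \(R(Z,\cdot,\cdot,\cdot)=0\), and by skew-symmetry in the first two slots, \(R(\cdot,Z,\cdot,\cdot)=0\) as well.
\end{itemize}
So \(\Omega_{ij}(X,Y)\) depends only on the components of \(X\) and \(Y\) in \(\mathcal N_p^\perp\). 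In other words, \(\Omega_{ij}=\pi^*\omega_{ij}\), where \(\pi\colon T_pM\to\mathcal N_p^\perp\) is the orthogonal projection and \(\omega_{ij}\) is a 2-form on \(\mathcal N_p^\perp\).

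It follows that
\[
\Omega_{ij}\wedge\Omega_{k\ell}=\pi^*(\omega_{ij}\wedge\omega_{k\ell}),
\]
and \(\omega_{ij}\wedge\omega_{k\ell}\in\Lambda^4(\mathcal N_p^\perp)^*=0\) because \(\dim\mathcal N_p^\perp\leq 3\).

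The indices \(i,j\) play no role in this argument, so it holds for any orthonormal basis. When the forms are restricted to a hypersurface or submanifold, as in Corollary \ref{cor:gbc}, the wedge of the restrictions is the restriction of the wedge, so it vanishes there too. The only real point is the pair symmetry, which moves the nullity condition from the last slots to the first slots. I expect no genuine obstacle beyond this.
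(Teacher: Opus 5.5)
Your proof is correct and is essentially the paper's argument: use the symmetries of $R$ to show each $\Omega_{ij}$ is pulled back from $\mathcal N_p^\perp$ by orthogonal projection, so $\Omega_{ij}\wedge\Omega_{k\ell}$ is pulled back from a $4$-form on a space of dimension at most $3$ and therefore vanishes. The paper projects all four slots of $R$ rather than only the two form slots you use, but this difference is immaterial.
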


\begin{proof}
Fix \(p\in M\), and let
\(
        E_p:=\mathcal N_p^\perp .
\)
By hypothesis, \(\dim(E_p)\leq3\). Let \(\pi  \colon T_pM\to E_p\) be the
orthogonal projection. Then
\[
        R(X,Y,Z,W)=R(\pi  X,\pi  Y,\pi  Z,\pi  W)
\]
for all \(X,Y,Z,W\in T_pM\). Indeed, if one of these vectors lies in
\(\mathcal N_p\), then $R$ vanishes. This is evident from the definition of $\mathcal{N}_p$ when the  vector appears in the
third slot, and follows for the other slots by the symmetries of
\(R\).
Consequently,
\[
        \Omega_{ij}(X,Y)
        =
        R(\pi  X,\pi  Y,\pi  e_i,\pi  e_j).
\]
Thus, as a \(2\)-form in \(X,Y\), each \(\Omega_{ij}\) is pulled back
from the vector space \(E_p\). It follows that
\(
        \Omega_{ij}\wedge\Omega_{k\ell}
\)
is pulled back from a \(4\)-form on \(E_p\). Since \(\dim E_p\leq3\),
every \(4\)-form on \(E_p\) vanishes, which completes the proof since $p$ is arbitrary.
\end{proof}

We orient
\(\Gamma\) by the outward unit normal \(\nu\), and use the convention
\(
        A(V)=\nabla_V\nu
\)
for the shape operator.
Then the principal curvatures \(\kappa_1,\dots,\kappa_{n-1}\) of
\(\Gamma\), and the Gauss-Kronecker curvature
\(
        GK=\kappa_1\cdots\kappa_{n-1}
\)
are nonnegative.
At each point \(p\in\Gamma\), choose an orthonormal basis
\(e_1,\dots,e_{n-1}\in T_p\Gamma\) which diagonalizes \(A\).
Let
\[
        K_{ij}:=\Omega_{ij}(e_i,e_j)
\]
be the sectional curvature of $M$ with respect to the plane spanned
by \(e_i,e_j\). The Gauss
equation gives
$
        R^\Gamma_{ijij}=K_{ij}+\kappa_i\kappa_j.
$
Letting \(\theta_1,\dots,\theta_{n-1}\) be the coframe dual to
\(e_1,\dots,e_{n-1}\), the Gauss equation may be written as
\be\label{eq:Gauss}
        \Omega^\Gamma_{ij}
        =
        \Omega_{ij}
        +
        \kappa_i\kappa_j\,\theta_i\wedge\theta_j.
\ee
Substituting this equation in the expressions for the Pfaffians in the last section, we obtain the following result. Recall that \(C\) denotes the  convex body bounded by \(\Gamma\). 

\begin{proposition}\label{prop:integrand-forms}
If \(n\) is odd, then
\[
        \Pf_\Gamma \leq GK.
\]
If \(n\) is even and \(n\geq4\), then
\[
        \Pf_C\equiv0, \qquad\text{and}\qquad
        \TPf_{\Gamma}\leq GK.
\]
\end{proposition}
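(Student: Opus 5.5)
The plan is to expand each Pfaffian-type integrand using the Gauss equation \eqref{eq:Gauss} (for odd $n$) or the definition of $\Phi_s$ (for even $n$). Lemma \ref{lem:curvature-forms-product} then kills every term containing two or more ambient curvature forms. What survives is $GK$ plus a sum of terms that are each linear in a single sectional curvature $K_{ab}$ of $M$, with coefficients that I expect to be nonnegative.

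Throughout, I work at a point $p\in\Gamma$ with the principal frame $e_1,\dots,e_{n-1}$ and dual coframe $\theta_i$, so that $\alpha_i=\kappa_i\theta_i$ on $T_p\Gamma$. The identity \eqref{eq:Omega} holds for the ambient forms $\Omega_{ij}$ on $T_pM$. Hence it still holds after restricting them to $T_p\Gamma$, since restriction (pullback) commutes with wedge products.

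\emph{Case $n$ odd.} Set $k=n-1$, which is even. Substitute \eqref{eq:Gauss} into $\Phi=\sum_\sigma \operatorname{sgn}(\sigma)\,\Omega^\Gamma_{\sigma(1)\sigma(2)}\wedge\cdots\wedge\Omega^\Gamma_{\sigma(k-1)\sigma(k)}$ and expand each product of $k/2$ factors. Terms with at least two $\Omega$ factors vanish by \eqref{eq:Omega}, leaving two kinds of terms.
\begin{itemize}
\item The terms with no $\Omega$ factor contribute
\[
\sum_\sigma \operatorname{sgn}(\sigma)\,\kappa_1\cdots\kappa_k\,(\theta_{\sigma(1)}\wedge\cdots\wedge\theta_{\sigma(k)})(e_1,\dots,e_k)=k!\,\kappa_1\cdots\kappa_k .
\]
The $k!$ arises because $\operatorname{sgn}(\sigma)^2=1$. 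Multiplying by $c_k=1/k!$ gives exactly $GK$.
\item The terms with exactly one factor $\Omega_{ab}$, where $\{a,b\}=\{\sigma(2m-1),\sigma(2m)\}$, wedge that factor with $\theta_c$ for all $c\neq a,b$, each weighted by the product of the corresponding $\kappa_c$.
\end{itemize}
When a term of the second kind is evaluated on $(e_1,\dots,e_k)$, the $\theta_c$ consume all slots except $e_a,e_b$. So only $\Omega_{ab}(e_a,e_b)=K_{ab}$ survives, and the accompanying sign is again $\operatorname{sgn}(\sigma)^2=1$. Collecting terms, I expect
\[
\Pf_\Gamma = GK + \lambda_k\sum_{a<b} K_{ab}\prod_{c\neq a,b}\kappa_c
\]
for some explicit combinatorial constant $\lambda_k>0$. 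Since $K_{ab}\le0$ (Cartan-Hadamard) and $\kappa_c\geq0$ (convexity), the sum is nonpositive, which gives $\Pf_\Gamma\le GK$.

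\emph{Case $n$ even, $n\ge4$.} Here $\Pf_C$ is built from wedge products of $n/2\geq2$ ambient curvature forms, so $\Pf_C\equiv0$ directly by Lemma \ref{lem:curvature-forms-product}. In \eqref{eq:TPf}, each $\Phi_s$ with $s\geq2$ contains at least two $\Omega$ factors and so vanishes. The term $s=0$ equals $GK$ by \eqref{eq:GK}. For $s=1$, I substitute $\alpha_i=\kappa_i\theta_i$. By the same slot-counting argument, $\Phi_1(e_1,\dots,e_{n-1})$ is a positive multiple of $\sum_{a<b}K_{ab}\prod_{c\neq a,b}\kappa_c$. The multiplier should be the number of permutations placing $\{a,b\}$ in the last two slots times $2$, and $c_{n,1}>0$. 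This term is therefore $\le0$, so $\TPf_\Gamma\le GK$.

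The main obstacle I anticipate is the sign bookkeeping in the one-$\Omega$ terms. Specifically, I must confirm that the evaluation $(\cdots\wedge\Omega_{ab}\wedge\cdots)(e_1,\dots,e_k)$ produces the factor $\operatorname{sgn}(\sigma)$ (so that it squares away), and that it yields $+K_{ab}$ and not $-K_{ab}$ under the stated convention $K_{ij}=\Omega_{ij}(e_i,e_j)$. I would check this first on the model case $k=2$, where $\Pf_\Gamma$ should be the intrinsic Gauss curvature $K_{12}+\kappa_1\kappa_2$. In the even-$n$ case I would check it for $n=4$ against the known boundary term of Chern's formula.
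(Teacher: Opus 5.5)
Your proposal is correct and follows the paper's proof essentially step for step. You expand via the Gauss equation \eqref{eq:Gauss} (resp.\ the definition of $\Phi_s$), kill every term with two or more ambient curvature forms by Lemma~\ref{lem:curvature-forms-product}, and identify the part with no ambient curvature form as $GK$. Each term with exactly one ambient curvature form then evaluates to $\operatorname{sgn}(\sigma)^2K_{ab}\prod_{c\neq a,b}\kappa_c\le 0$, and $\Pf_C\equiv0$ for even $n\geq4$.

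The sign bookkeeping you flag works out as you predicted. Wedging $\Omega_{ab}$ with the $\theta_c$, $c\neq a,b$, leaves $\Omega_{ab}(e_a,e_b)\,\theta_{\sigma(1)}\wedge\cdots\wedge\theta_{\sigma(k)}$. The identity $\Omega_{ab}(e_a,e_b)=K_{ab}$ is exactly the convention forced by the normalization $\Pf_{\S^k}\equiv1$ and adopted in the paper's definition of $K_{ij}$. In particular, your $k=2$ check gives $\Pf_\Gamma=K_{12}+\kappa_1\kappa_2$, and in general $\lambda_k=1/(k-1)$.
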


\begin{proof}
First suppose that \(n\) is odd. Then \(\Gamma\) has even dimension, and \(\Pf_\Gamma\)  is a linear
combination of wedge products of the forms \(\Omega^\Gamma_{ij}\) by \eqref{eq:Pf}.
After substituting the Gauss equation \eqref{eq:Gauss}, each term contains some number of
ambient curvature forms \(\Omega_{ij}\) and some number of factors
\(\kappa_i\kappa_j\,\theta_i\wedge\theta_j\). By Lemma
\ref{lem:curvature-forms-product}, all terms with two or more ambient
curvature forms vanish. Thus only the terms with no ambient curvature
form or exactly one ambient curvature form can contribute.

The contribution to \(\Pf_\Gamma\) from terms with no ambient curvature
form is precisely \(GK\).
Now consider a term with exactly one ambient curvature form, say \(\Omega_{ij}\). Since all the remaining factors come
from the diagonal second fundamental form, they contribute multiples of
\(\theta_a\wedge\theta_b\) and supply all coframe directions except
\(\theta_i,\theta_j\). Consequently, when the resulting \((n-1)\)-form
is evaluated on \(e_1,\dots,e_{n-1}\), we obtain
\[
        \Omega_{ij}(e_i,e_j)
        \prod_{\ell\neq i,j}\kappa_\ell 
        =
        K_{ij}
        \prod_{\ell\neq i,j}\kappa_\ell \leq 0,
\]
where the last inequality holds since $K_{ij}\leq 0$ by the Cartan-Hadamard assumption,  and $\kappa_\ell\geq 0$ by convexity of $\Gamma$.
Hence, for some constant $c_n>0$,
\be\label{eq:PfGamma}
        \Pf_\Gamma
        =
        GK+
        c_n
        \sum_{1\leq i<j\leq n-1}
        K_{ij}\prod_{\ell\neq i,j}\kappa_\ell \leq GK.
\ee

Next suppose that \(n\) is even and \(n\geq4\). By \eqref{eq:Pf},
\(\Pf_C\) consists of wedge products of \(n/2\) ambient curvature forms.
Since \(n/2\geq2\), each such term contains at least two ambient
curvature forms, and hence vanishes by Lemma
\ref{lem:curvature-forms-product}. So $\Pf_C\equiv 0$. 

Finally, to evaluate
\(\TPf_{\Gamma}\), note that in \eqref{eq:TPf}, the factors $\alpha_i$ coming
from the second fundamental form are the \(1\)-forms
\(
        \kappa_i\theta_i.
\)
 The first term, which contains no ambient curvature form, contributes the Gauss-Kronecker curvature  $GK$ by  \eqref{eq:GK}. As above, Lemma
\ref{lem:curvature-forms-product} eliminates all terms with two or more
ambient curvature forms. If a term contains exactly one ambient
curvature form \(\Omega_{ij}\), then again the remaining second fundamental
form factors supply all coframe directions except \(\theta_i,\theta_j\).
Hence only the component \(\Omega_{ij}(e_i,e_j)=K_{ij}\) contributes.
Thus, for a constant $c_n'>0$,
\be\label{eq:TPfGamma}
        \TPf_{\Gamma}
        =
        GK+
        c'_n
        \sum_{1\leq i<j\leq n-1}
        K_{ij}\prod_{\ell\neq i,j}\kappa_\ell \leq GK,
\ee
which completes the proof.
\end{proof}

Now if \(n\) is odd, then Corollary
\ref{cor:gbc} together with Proposition \ref{prop:integrand-forms} gives
\be\label{eq:last1}
        |\S^{n-1}|
        =
        \int_\Gamma \Pf_\Gamma
        \leq
        \int_\Gamma GK
        =
        \G(\Gamma).
\ee
If \(n=2\), then the ordinary Gauss-Bonnet theorem gives
\(2\pi=\int_C K+\int_\Gamma GK\), and since \(K\leq0\), it follows that
\(\G(\Gamma)=\int_\Gamma GK\geq2\pi=|\S^1|\). Finally suppose that \(n\geq4\) is
even. Then Corollary
\ref{cor:gbc} and Proposition \ref{prop:integrand-forms} give
\be\label{eq:last2}
        |\S^{n-1}|
        =
        \int_\Gamma \TPf_{\Gamma} 
        \leq
        \int_\Gamma GK
        =
        \G(\Gamma),
\ee
which completes the proof.

\section{Notes}
\begin{note}
The nullity assumption $\mu(M)\geq n-3$ was used in the proof of Theorem \ref{thm:main} only to derive  \eqref{eq:Omega}, which is a formally weaker condition at each point. Thus the total curvature inequality \eqref{eq:G} holds whenever  \eqref{eq:Omega} holds on $M$. More specifically, it is enough that  \eqref{eq:Omega} holds on $\Gamma$ when $n$ is odd, and on the convex body $C$ when $n$ is even.
\end{note}

\begin{note}
If equality holds in the total curvature inequality \eqref{eq:G}, for $n\geq 3$, then
equality holds in the corresponding estimate \eqref{eq:last1} or
\eqref{eq:last2}, according as \(n\) is odd or even. Hence equality also
holds in the corresponding pointwise inequality \eqref{eq:PfGamma} or
\eqref{eq:TPfGamma}, which yields
\[
        K_{ij}\prod_{\ell\neq i,j}\kappa_\ell=0
\]
for every \(i<j\). In particular, when \(\Gamma\) is strictly convex,
i.e., \(\kappa_\ell>0\) for all \(\ell\), equality in \eqref{eq:G}
forces the sectional curvatures of \(M\) to vanish on tangent planes of
\(\Gamma\), which in turn yields that \(C\) is flat
\cite{ghomi2025-convexity}.
\end{note}

\begin{note}
The total curvature of an arbitrary convex hypersurface
\(\Gamma\subset M^n\) may be defined as the limit of the total
curvatures of its outer parallel hypersurfaces, which are
\(\C^{1,1}\) \cite{ghomi-spruck2022}. With this definition, the total curvature functional
\(\mathcal G\) is continuous on the space of convex hypersurfaces in
Cartan-Hadamard manifolds, with respect to Hausdorff distance
\cite{ghomi2026-continuity}. Since smooth convex hypersurfaces are
dense in this space \cite[Lem. 3.9]{ghomi2026-total}, Theorem
\ref{thm:main} extends to all convex hypersurfaces, with no smoothness
assumption.
\end{note}

\section*{Acknowledgment}
The author thanks John Stavroulakis for useful discussions.

\bibliography{references}

\end{document}